\documentclass[letterpaper, 10 pt, journal, twoside]{ieeetran} 

\IEEEoverridecommandlockouts            


\usepackage{titlesec}
\usepackage{microtype}
\usepackage{hyperref}
\usepackage{url}

\usepackage[noadjust]{cite}

\usepackage{microtype}
\usepackage{graphicx}
\usepackage{caption}
\usepackage{subcaption}
\usepackage{wrapfig}
\usepackage{booktabs} 

\usepackage{algorithm}
\usepackage{algorithmic}
\usepackage{lipsum}
\usepackage{xcolor}
\usepackage{amssymb,amsmath}
\usepackage{amsthm}

\newtheorem{remark}{Remark}
\newtheorem{definition}{Definition}
\newtheorem{lemma}{Lemma}
\newtheorem{theorem}{Theorem}


\setlength{\abovedisplayskip}{5.6pt plus 2.0pt minus 5.0pt}
\setlength{\belowdisplayskip}{5.6pt plus 2.0pt minus 5.0pt}

\graphicspath{{./Images/}}
\DeclareGraphicsRule{.JPG}{eps}{*}{jpeg2ps#1}

\title{\LARGE \bf
Heterogeneous Explore-Exploit Strategies on Multi-Star Networks
}

\author{Udari Madhushani, \IEEEmembership{Student Member, IEEE}, Naomi Ehrich Leonard, \IEEEmembership{Fellow, IEEE}
\thanks{This research has been supported in part by ONR grants N00014-18-1-2873 and N00014-19-1-2556 and ARO grant W911NF-18-1-0325. \textit{(Corresponding author: Udari Madhushani.)}}
\thanks{The authors are with the Department of Mechanical and
Aerospace Engineering, Princeton University, Princeton, NJ 08544
USA (e-mail: udarim@princeton.edu; naomi@princeton.edu).}}

\begin{document}

\maketitle
\thispagestyle{empty}
\pagestyle{empty}


\begin{abstract}

We investigate the benefits of heterogeneity in multi-agent explore-exploit decision making where the goal of the agents is to maximize cumulative group reward. To do so we study a class of distributed stochastic bandit problems in which agents communicate over a multi-star network  and make sequential choices among options in the same uncertain environment.  
Typically, in multi-agent bandit problems, agents use homogeneous decision-making strategies. However, group performance can be improved by incorporating heterogeneity into the choices agents make, especially when the network graph is irregular, i.e. when agents have different numbers of neighbors.  We design and analyze new heterogeneous explore-exploit strategies, using the multi-star as the model irregular network graph.  The key idea is to enable center agents to do more exploring than they would do using the homogeneous strategy, as a means of providing more useful data to the peripheral agents.
In the case all agents  broadcast their reward values and choices to their neighbors with the same  probability, we provide theoretical guarantees that group performance improves under the proposed heterogeneous  strategies as compared to under homogeneous strategies. We use numerical simulations to illustrate our results and to validate our theoretical bounds.  

\end{abstract}

\begin{IEEEkeywords}
Bandit algorithms, distributed learning, heterogeneous strategies
\end{IEEEkeywords}


\section{Introduction}\label{sec:intro}

\IEEEPARstart{T}{he} influence of agent heterogeneity on cooperation in social learning has been a recent focus of research in many fields, including ecology, sociology, and decision theory \cite{li2019influence}. Studies on evolutionary human behavior provide evidence that individual differences can be leveraged to enhance collective prosperity \cite{amaral2016evolutionary}. Motivated by applications such as  social foraging and multi-robot coordination tasks, we study and design cooperative strategies for a group of agents making sequential explore-exploit decisions in an uncertain environment. The strategies we design incorporate agent heterogeneity to optimize the performance of the group through collective learning.

Consider a group of agents, each making a sequence of choices among options in an uncertain environment in order to maximize collective payoff. At each time step in the sequence, each agent chooses an option depending on the knowledge it has acquired about the environment up to that time step. Maximizing payoff  necessitates striking a balance between making choices that yield high immediate payoff, i.e., exploiting, and making choices that yield high information content and possibly high future payoffs, i.e., exploring. When an agent fails to acquire sufficient information about the environment to make optimal decisions, it must sacrifice exploitation potential in order to explore. However, in the group setting, agents can recover exploitation potential by gaining information through cooperation i.e., through collective learning. 

Sequential decision making in uncertain environments that requires trading off exploitation and exploration is modeled mathematically by the bandit framework \cite{robbins1952some}. In the multi-armed bandit (MAB) problem, an agent is repeatedly faced with the task of choosing an option from a given set of options. At each time step the agent receives a stochastic reward drawn from a fixed probability distribution associated with the chosen option. The agent's goal is to maximize the  cumulative reward by the end of the decision-making process. This requires choosing frequently enough the optimal option i.e., the option with highest expected reward. In order to meet this requirement, the agent must simultaneously choose options that are known to provide high rewards (exploit) and choose lesser known options (explore) that might potentially provide even higher rewards \cite{lai1985asymptotically, auer2002finite}. 

Maximizing   cumulative reward is equivalent to minimizing   cumulative regret, defined as the loss incurred by an agent  choosing a sub-optimal option instead of the optimal option. Since the probability distribution associated with each option is fixed, cumulative regret can be minimized by reducing the number of times sub-optimal options are chosen. Performance of the proposed algorithms for this problem is measured using expected cumulative regret. The paper \cite{lai1985asymptotically} establishes that any efficient policy chooses suboptimal options asymptotically logarithmically in time. The paper \cite{auer2002finite} proposes an Upper Confidence Bound (UCB) based sampling rule that achieves a logarithmic expected cumulative regret uniformly in time.

The papers \cite{landgren2016distributedCDC,landgren2020distributed,martinez2019decentralized,szorenyi2013gossip,madhushani2020dynamic,madhushani2020distributed,chakraborty2017coordinated,madhushani2019heterogeneous,landgren2018social,wang2020optimal,kolla2018collaborative} extend to the multi-agent setting and capture different aspects of collective learning. 
In \cite{landgren2016distributedCDC,landgren2020distributed,martinez2019decentralized,szorenyi2013gossip}, agents share their estimates of the expected reward of options with neighbors according to fixed communication structures.  The papers \cite{landgren2016distributedCDC,landgren2020distributed}  use a running consensus algorithm to update estimates and provide graph-structure-dependent performance measures that predict the relative performance of agents and networks. The paper \cite{landgren2020distributed} also addresses the case of a constrained reward model in which agents that choose the same option at the same time step receive no reward.  The paper \cite{martinez2019decentralized} 
proposes an accelerated consensus procedure assuming agents know the spectral gap of the communication graph and designs a decentralized UCB algorithm based on delayed rewards. The paper \cite{szorenyi2013gossip} considers a P2P communication where an agent is only allowed to communicate with two other agents at each time step.

The papers \cite{madhushani2020dynamic,madhushani2020distributed,chakraborty2017coordinated,madhushani2019heterogeneous,landgren2018social,wang2020optimal,kolla2018collaborative} consider the case in which agents  share reward values and choices with neighbors. In \cite{madhushani2020dynamic,madhushani2020distributed,chakraborty2017coordinated},  agents use stochastic communication structures that depend on the decision-making process. In \cite{madhushani2020dynamic}, each agent observes rewards and actions of its neighbors when it is exploring. In \cite{madhushani2020distributed}, each agent instead broadcasts its rewards and actions to its neighbors when it is exploring. In \cite{chakraborty2017coordinated}, at each time step, agents decide either to sample an option or to broadcast the last obtained reward to the entire group. 

The setup in our earlier paper \cite{madhushani2019heterogeneous} is closest to that in the present paper: agents observe reward values and actions of their neighbors defined by a network graph that changes in time according to probabilistic edge weights. An underlying fixed network graph is given, and each agent $k$ observes its neighbors with probability $p_k$.  The communication structure is independent of the decision-making process. 

The papers \cite{landgren2016distributedCDC,landgren2020distributed,martinez2019decentralized,szorenyi2013gossip,madhushani2020dynamic,madhushani2020distributed,chakraborty2017coordinated,madhushani2019heterogeneous} consider homogeneous protocols, whereas the papers \cite{landgren2018social,wang2020optimal,kolla2018collaborative} consider protocols where some agents (followers) copy actions of others (leaders). In \cite{landgren2018social}, followers observe  rewards and choices of their neighbors. In \cite{wang2020optimal}
 one leader explores and estimates the mean reward of options, while all other agents choose the option with highest estimated mean per the leader. The paper \cite{kolla2018collaborative} proposes the FYL algorithm, which uses a deterministic communication protocol and exploits degree heterogeneity of the communication network graph. FYL outperforms our algorithm when $p_k = p=1$; however, our algorithm provides a method to exploit agent heterogeneity when agents share information with probability $0<p<1$.
 
When communication among agents is defined by an irregular network graph, e.g., some agents serve as information hubs, group performance can be improved by using heterogeneous explore-exploit strategies. 
To understand this, consider an environment with unconstrained resources.  Then, agents can only influence the decisions of one another through the information they share, and the structure of interactions that defines neighbors, i.e., who is sharing information with whom, strongly affects the quality and quantity of information received by each individual. 

We consider the case that all agents broadcast their instantaneous rewards and actions to their neighbors with probability $p$. This communication protocol is motivated by real-world applications in which estimates of mean rewards or the sum of collected rewards, which rely on the history of choices and rewards, are deliberately not disclosed to protect privacy \cite{feraud2018decentralized}. 
For example, in user targeted recommender systems \cite{warlop2018fighting} (or clinical trials \cite{tossou2016algorithms}), sharing user (patient) history of  choices can reveal sensitive information about users (patients). Even when an agent is broadcasting only its current rewards and actions to neighbors, an adversarial agent can listen to the broadcasts and access the history of choices made by the agent. To reduce such privacy leakage we consider  agents that broadcast instantaneous rewards and actions probabilistically.  Further, if communication failures are possible, then having agents broadcast only current rewards and actions avoids problems associated with agents losing track of what information has and has not been received by neighbors.  In this context, $1-p$ represents the probability of communication failure.

In irregular and centralized networks like the multi-star, center agents have more neighbors and thus  receive more information than peripheral agents. This leads to an imbalanced exploitation potential across the group \cite{sueur2012social,li2019influence}, and group performance degrades with increasing number of peripheral agents. We investigate improving group performance by leveraging heterogeneity in the exploitation potential of agents. To do so we propose  heterogeneous explore-exploit strategies that require center agents to explore more and thus increase the exploitation potential of peripheral agents.

The multi-star network models recommender systems, where there are many small servers, assigned to different regions, that each make sequential recommendations based on user feedback and communicate only with a large central server.
Performance can be improved by using the central server to suggest more exploratory recommendations which allows the system to gather more information about user preferences. Probabilistic communication accounts for random communication failures between servers.

The paper is organized as follows. In Section \ref{sec:problem} we provide the problem formulation and notation. Section \ref{sec:algo} presents the proposed algorithm and intuition. We analyze  performance of the proposed algorithm in Section \ref{sec:performance} and provide improved theoretical bounds for the expected cumulative group regret.  In Section \ref{sec:simulation} we show numerical simulations to illustrate and validate the theoretical results. We conclude in Section \ref{sec:conl}. 


\section{Problem Formulation}\label{sec:problem}
In this section we present the problem formulation and relevant mathematical notations. Consider a group of $K$ agents, each faced with the same $N$-armed bandit problem for $T$ time steps. At each time step $t \in \{1, \ldots, T\}$, each agent chooses an option and receives a stochastic reward associated with the chosen option. Let $X_i$ be a sub-Gaussian random variable that denotes the {\em reward associated with option} $i\in \{1,\ldots,N\}.$ Sub-Gaussian rewards include widely used distributions such as Bernoulli, Gaussian, and bounded rewards. Define $\mu_i=\mathbb{E}(X_i)$ and $\sigma^2_i$ as the {\em expected reward} and {\em variance proxy} associated with option $i$, respectively. Let $i^*=\arg_{i} \max \{\mu_1,\ldots, \mu_N\}$ be the {\em optimal option} with highest expected reward. Define $\Delta_i=\mu_{i^*}-\mu_i$ as the {\em expected reward gap} between option $i^*$ and option $i.$

Let $G(\mathcal{V},\mathcal{E})$ be a fixed undirected network graph that defines the structure of the interactions between agents. This captures the inherent hard communication constraints of the system. Here $\mathcal{V}$ is a set of $K$ vertices such that each vertex corresponds to an agent. Each edge $e(k,j)\in \mathcal{E}$ in the graph denotes that agent $k$ and agent $j$ are {\em neighbors}. At each time step, each agent broadcasts its reward value and action to its neighbors with {\em broadcasting probability} $p$. Let $\mathbb{I}^t_{\{k,j\}}$ be the indicator random variable that takes value 1 if agent $k$ receives information from agent $j$ at time $t$ and 0 otherwise. Then, for every time $t$,  $\mathbb{E}(\mathbb{I}^t_{\{k,j\}})=p,\forall k,j$ such that $e(k,j)\in \mathcal{E}$, and $\mathbb{E}(\mathbb{I}^t_{\{k,j\}})=0$ otherwise. We define $\mathbb{I}_{\{k,k\}}^t=1,\forall k,t.$ 

Let $d_k$ be the {\em degree} (number of neighbors) of agent $k$ and $d_{avg}=\frac{1}{K}\sum_{k=1}^K d_k$ be the {\em average degree of the network}. Let $d^{avg}_k$ be the {\em average degree of neighbors of agent $k$}: $
d^{avg}_k=\frac{1}{d_k}\sum^K_{e(k,j)\in\mathcal{E}} d_j.
$

We focus on {\em multi-star} graphs defined as follows. Let there be $m$ center agents and $K-m$ peripheral agents.  Without loss of generality let each agent $k$, $k \leq m$, be a {\em center agent}.  All center agents are neighbors of one another, i.e., $e(k,j)\in \mathcal{E}, \forall k,j \leq m$, and a center agent's degree $d_k$ is at least $m-1$. Each {\em peripheral agent} $k$, $k > m$, has exactly one neighbor ($d_k=1$), and the neighbor is a center agent. To reduce complexity, we assume the graph is symmetric, which implies that all  center agents have the same number of neighbors. Thus $K-m$ is an integer multiple of $m$. If $K > 2$ and $m<K$, the multi-star graph is {\em irregular}, i.e., the degree of center agents differs from the degree of peripheral agents. Let $d_{cen}$ be the degree of each center agent. Then, $d_{cen}=\frac{K-m}{m}+m-1.$ When $m=1$ the graph is a star, the most irregular multi-star graph. When $m=K$, there are no peripheral agents and the graph is all-to-all and thus regular.

Let $\varphi_t^k$ be a random variable that denotes the {\em option chosen by agent $k\in \{1,\ldots,K\}$ at time $t\in\{1,\ldots,T\}$}. Let $\mathbb{I}_{\{\varphi_t^k=i\}}$ be an indicator random variable that takes value 1 if agent $k$ chooses option $i$ at time $t$ and 0 otherwise. Let $n_i^k(t)$  be the {\em total number of times agent $k$ chooses option $i$ until time $t$} and let $N_i^k(t)$ be the {\em total number of times agent $k$ observes option $i$ until time $t$}. The total number of observations is the sum of the number of samples taken from option $i$ by agent $k$ and the number of broadcasts on option $i$ by its neighbors:
\begin{align}
n_i^k(t)=\sum_{\tau=1}^t\mathbb{I}_{\{\varphi_{\tau}^k=i\}},\:\:\:N_i^k(t)=\sum_{\tau=1}^t\sum_{ j=1}^K\mathbb{I}_{\{\varphi_{\tau}^j=i\}}\mathbb{I}^{\tau}_{\{k,j\}}.
\label{eq:nandN}
\end{align}
Let $\widehat{\mu}_i^k(t)$ denote the estimate of expected reward of agent $k$ for option $i$ at time $t.$ Then, $
\widehat{\mu}^k_i(t)=\frac{S_i^k(t)}{N^k_i(t)}$, where $S_i^k(t)=\sum_{\tau=1}^t\sum_{j=1}^K X_i\mathbb{I}_{\{\varphi^j_{\tau}=i\}}\mathbb{I}^{\tau}_{\{k,j\}}.$ 

{\em Expected regret} is defined as the expected loss suffered by agents by sampling sub-optimal options. Let $R(t)$ be the {\em cumulative group regret} at time $t.$ Then {\em expected cumulative group regret} can be computed as
\begin{align}
\mathbb{E}\left(R(t)\right)=\sum_{i=1}^N\sum_{k=1}^K\Delta_i\mathbb{E}\left(n_i^k(t)\right)\label{eq:regret}.
\end{align}


\section{Algorithm}\label{sec:algo}
To realize the goal of maximizing cumulative group reward, agents should minimize the number of times they sample sub-optimal options. Each agent employs an agent-based strategy that captures the trade-off between exploring and exploiting by constructing an objective function that strikes a balance between the estimation of the expected reward and the uncertainty associated with the estimate \cite{auer2002finite}. 

Since center agents have more neighbors they are more likely to obtain a high number of observations. This reduces the uncertainty associated with their estimate of the expected reward of options. Thus,  identifying the optimal option requires less exploring, which increases their exploitation potential. Since peripheral agents only have one neighbor they are more likely to obtain a low number of observations. Thus, identifying the optimal option requires more exploring, which decreases their exploitation potential. Further, since center agents do less exploring, the usefulness of the information they broadcast is reduced, also decreasing the peripheral agents' exploitation potential. Accordingly, homogeneous sampling rules in irregular, multi-star networks lead to imbalanced exploitation potential across the group and thus degraded group performance.

To improve group performance, we propose heterogeneous explore-exploit strategies that regulate  exploitation potential across the network. When center agents are more exploratory their performance degrades, but the usefulness of the information they broadcast increases and so the performance of peripheral agents improves. When there are more peripheral agents than center agents, and broadcasting probability $p$ is sufficiently high, the performance improvement obtained by peripheral agents outweighs the performance degradation incurred by center agents, and group performance increases.  If $p$ is too small, for example, when broadcasting is costly or risky, center agents do not broadcast enough information to benefit peripheral agents. Thus it doesn't pay for center agents to increase their exploration. Indeed, when $p=0$ all  agents have the same exploitation potential. 

Using this intuition, we propose the following heterogeneous sampling rules. 
Assume that variance proxy  $\sigma^2_i$ for each option $i$ is known to all agents. 
\begin{definition} {\bf{(Heterogeneous Sampling Rules)}}\label{def:samplerule}
The sampling rule $\{\varphi^k_t\}_1^{T}$ of agent $k$ at time $t \in \{1, \ldots, T\}$ is
    \begin{align*}
    \mathbb{I}_{\{\varphi^k_{t+1}=i\}}=\left\{
    \begin{array}{cl} 1 &, \:\:\:i=\arg \max \{Q^k_1(t),\cdots,Q^k_N(t)\}\\
      0 &, \:\:\: {\mathrm{o.w.}}\end{array}\right.
    \end{align*}
    with     
    \begin{align}
    Q^k_i(t)&= \widehat{\mu}^k_{i}(t)+C^k_i(t)\label{eq:UCBQ}\\
    C^k_i(t)&=\sigma_{i}\sqrt{\frac{2(1+\alpha_k)(\xi+1)\log t}{N^k_{i}(t)}}\label{eq:Uncertainity}
    \end{align}
    where $\xi>1$ and 
    \begin{align}
\alpha_k=\left\{
    \begin{array}{cl} 
     \frac{ p^{1-p}(d_k-d^{avg}_k)}{d_k} &, \:\:\: k\leq m \\
     0 &, \:\:\:k>m.
     \end{array}\right. 
     \label{eq:alpha}
    \end{align}
\end{definition}

$C_i^k(t)$ in \eqref{eq:Uncertainity} represents {\em agent $k$'s uncertainty in its estimated mean of option $i$}, and Definition~\ref{def:samplerule} implies that for any agent $k$,  when $C_i^k(t)$ is high, agent $k$ will more likely explore. By \eqref{eq:Uncertainity}, $C_i^k(t)$ can be high when $N_i^k$, the number of agent $k$'s observations of option $i$, is low, i.e., when option $i$ is under-sampled.  $C_i^k(t)$ can also be high when {\em agent  $k$'s exploration bias} $\alpha_k>0$ is high. 

By \eqref{eq:alpha}, $\alpha_k \neq 0$ only for center agents. Since peripheral agents have one center agent neighbor, $d_k^{avg} \leq d_k$ and thus $\alpha_k \geq 0$ for every center agent $k \leq m$. In fact, $\alpha_k\geq 0$ is designed to grow with increasing irregularity: in the regular case (all-to-all) when $m=K$, $\alpha_k = 0$, and in the most irregular case (star) when $m=1$, $d_1 = K-1$ and $d_1^{avg} = 1$ so $(d_1-d_1^{avg})/d_1 = (K-2)/(K-1)$. Further, $\alpha_k$ grows with $p$ according to the factor $p^{1-p}$, which grows rapidly for intermediate values of $p$ and is large (i.e., saturates to 1) only when center agents are broadcasting their reward values and actions with sufficiently high probability $p$.

\begin{definition}\label{def:homrule}
To get the {\em corresponding homogeneous sampling rules} let $\alpha_k=0,\forall k$, in Definition~\ref{def:samplerule}. Heterogeneous and homogeneous rules for peripheral agents are the same.
\end{definition}

By design, the heterogeneous  rules of Definition \ref{def:samplerule} drive center agents to explore more than  the corresponding homogeneous  rules and only when it benefits group performance.

\section{Performance Analysis}\label{sec:performance}
In this section we analyze the performance of the heterogeneous sampling rules of Definition~\ref{def:samplerule}.  Using an  approach similar to \cite{auer2002finite} with a few key modifications, we upper bound the expected cumulative group regret $\mathbb{E}(R(T))$. We  show that the bound is lower than the upper bound in the case of the corresponding homogeneous sampling rules, and so we can conclude that the designed heterogeneous strategies provide better group performance than the homogeneous strategies.

By (\ref{eq:regret}), we  upper bound  $\mathbb{E}(R(T))$ if we   upper bound  $\sum_{k=1}^{K} \mathbb{E}(n_i^k(T))$, where  $n_i^k(T)$ is the number of times   agent $k$ samples sub-optimal option $i$ until time $T$. By Definition~\ref{def:samplerule}, agent $k$ chooses sub-optimal option $i$ at time  $t$ if $Q_i^k(t)\geq Q_{i^*}^k(t)$.  
Then, $
n_i^k(t)=\sum_{\tau=1}^t\mathbb{I}_{\{\varphi_\tau^k=i\}}\leq \sum_{\tau=1}^t\mathbb{I}_{\{Q_i^k(\tau)\geq Q_{i^*}^k(\tau)\}}.$
For each option $i$ and agent $k$ let $\{\eta_i^k(t)\}_1^T$ be a sequence of nonnegative nondecreasing functions. Then,
\begin{align}
\sum_{k=1}^K\mathbb{E}&\left(n_i^k(T)\right) 
\leq\sum_{k=1}^K\sum_{t=1}^T\mathbb{E}\left(\mathbb{I}_{\{\varphi_t^k=i\}},N_i^k(t)\leq\eta_i^k(t)\right)\nonumber\\
+&\sum_{k=1}^K\sum_{t=1}^T\mathbb{P}\left(Q_i^k(t)\geq Q_{i^*}^k(t),N_i^k(t)>\eta_i^k(t)\right). \label{neq:step1regret}
\end{align}

It remains to upper bound the right hand side of  (\ref{neq:step1regret}) and we do so in two steps. First, we  upper bound the second summation term of  (\ref{neq:step1regret}) as follows. From  (\ref{eq:UCBQ}) we have
\begin{align}
&\left\{Q_{i}^{k}(t)\geq Q_{i^{*}}^{k}(t)\right\}\subseteq \left\{\mu_{i^{*}}<\mu_{i}+2C_{i}^{k}(t)\right\}\nonumber\\
&\cup\left\{\widehat{\mu}_{i^{*}}^{k}(t)\leq \mu_{i^{*}}-C_{i^{*}}^{k}(t)\right\}\cup \left\{\widehat{\mu}_{i}^{k}(t)\geq \mu_{i}+C_{i}^{k}(t)\right\}\label{eq:seteq}.
\end{align}
For all $k$ let
\begin{align}
\eta_i^k(t)=(1+\alpha_k)\eta_i(t), \;\;\; \eta_i(t)=\frac{8\sigma_i^2(\xi+1)\log t}{\Delta_i^2}.
\label{eq:eta}
\end{align}
Then, by (\ref{eq:Uncertainity}),  $\{\mu_{i^{*}}<\mu_{i}+2C_{i}^{k}(t)\}\cap\{N_i^k(t)> \eta_i^k(t)\}=\emptyset$ where $\emptyset$ is the empty set.
Using (\ref{eq:seteq}) we obtain
\begin{align}
&\mathbb{P}\left(Q_i^k(t)\geq Q_{i^*}^k(t),N_i^k(t)>\eta_i^k(t)\right)\leq\nonumber\\
 &\mathbb{P}\left(\widehat{\mu}_{i^{*}}^{k}(t)
\leq \mu_{i^{*}}-C_{i^{*}}^{k}(t)\right)
+\mathbb{P}\left(\widehat{\mu}_{i}^{k}(t)
\geq \mu_{i}+C_{i}^{k}(t)\right).\label{eq:twotail}
\end{align}
To upper bound the right hand side of \eqref{eq:twotail} we use the tail probability bound provided in the following lemma.
\begin{lemma} \label{lem:tailProb}
For any $\xi>1,$ some $\zeta>1$ and for $\sigma_i>0$ in the uncertainty $C_i^k(t)$ given by \eqref{eq:Uncertainity}, we get 
\begin{align*}
    \mathbb{P}\left(\big |\widehat{\mu}_i^k(t)-{\mu}_i\big |>C_i^k(t)\right) \leq \frac{1}{\log \zeta}\frac{\log \left((1+d_k)t\right)}{t^{(\xi+1)(1+\alpha_k)}}.
\end{align*}
\end{lemma}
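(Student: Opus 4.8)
The plan is to bound the two-sided tail probability $\mathbb{P}\left(|\widehat{\mu}_i^k(t)-\mu_i| > C_i^k(t)\right)$ by conditioning on the number of observations $N_i^k(t)$, and handling the fact that $N_i^k(t)$ is itself a random variable (so that $\widehat{\mu}_i^k(t)$ is an average of a random number of sub-Gaussian samples) by a union bound over the possible values of $N_i^k(t)$. Concretely, $\widehat{\mu}_i^k(t) - \mu_i = S_i^k(t)/N_i^k(t) - \mu_i$ where $S_i^k(t)$ is a sum of $N_i^k(t)$ independent (zero-mean, after centering) $\sigma_i$-sub-Gaussian increments. The key technical point is that $N_i^k(t)$ is adapted to a filtration under which each newly observed reward is still conditionally sub-Gaussian with variance proxy $\sigma_i^2$; since the increments $\mathbb{I}^\tau_{\{k,j\}}$ and the choice indicators $\mathbb{I}_{\{\varphi^j_\tau=i\}}$ are determined by the past, one can treat $S_i^k(t)$ as a martingale and apply a Hoeffding/Azuma-type maximal inequality, or alternatively peel over the at most $t(1+d_k)$ possible values of $N_i^k(t)$ (each of the $d_k$ neighbors plus agent $k$ itself can contribute at most $t$ observations).

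The main steps, in order, are: (i) Write $\mathbb{P}\left(|\widehat{\mu}_i^k(t)-\mu_i| > C_i^k(t)\right) = \mathbb{P}\left(|S_i^k(t) - \mu_i N_i^k(t)| > N_i^k(t) C_i^k(t)\right)$ and recall from \eqref{eq:Uncertainity} that $N_i^k(t)(C_i^k(t))^2 = 2\sigma_i^2(1+\alpha_k)(\xi+1)\log t$, so the threshold in the exponent is a clean deterministic quantity once we fix $N_i^k(t)=s$. (ii) Apply a union bound over $s \in \{1, \ldots, (1+d_k)t\}$: for each fixed $s$, the partial sum of $s$ centered sub-Gaussian terms satisfies a Hoeffding bound $\mathbb{P}\left(|\cdot| > \sqrt{2\sigma_i^2(1+\alpha_k)(\xi+1)\log t \cdot s}\right) \le 2\exp\left(-(1+\alpha_k)(\xi+1)\log t\right) = 2 t^{-(\xi+1)(1+\alpha_k)}$, using a peeling/"slicing" trick (à la the proof of UCB1 in \cite{auer2002finite}) to control the random index. (iii) Multiply by the number of slices $(1+d_k)t$ and absorb constants: this yields a bound of order $(1+d_k)t \cdot t^{-(\xi+1)(1+\alpha_k)}$, which after a logarithmic refinement of the peeling (using geometric rather than unit-spaced slices, with ratio $\zeta>1$) sharpens to $\frac{1}{\log\zeta}\cdot\frac{\log((1+d_k)t)}{t^{(\xi+1)(1+\alpha_k)}}$ as claimed; the $\log\zeta$ and $\log((1+d_k)t)$ are exactly the signature of a dyadic-style peeling argument where one pays a $\log$ factor in the number of geometric blocks rather than a full linear factor.

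The hard part will be step (ii): correctly justifying that the standard concentration inequality applies even though $N_i^k(t)$, the effective sample size, is a stopping-time-like random variable correlated with the decision process. The cleanest route is to define, for each neighbor $j$ and each time $\tau$, the contribution $(X_i^{j,\tau} - \mu_i)\mathbb{I}_{\{\varphi^j_\tau=i\}}\mathbb{I}^\tau_{\{k,j\}}$ as a martingale difference with respect to the natural filtration $\mathcal{F}_\tau$ (generated by all choices, rewards, and communication indicators up to $\tau$), note that it is conditionally $\sigma_i$-sub-Gaussian on the event it is nonzero, and then invoke a uniform (over all $t' \le t$ and over the realized count) deviation bound — this is where the $(1+d_k)$ factor enters, since the cumulative count $N_i^k(t)$ ranges over at most $(1+d_k)t$ integer values. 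One must be careful that the geometric peeling is set up so the union bound is taken over $O(\log_\zeta((1+d_k)t))$ blocks, each contributing $\le 2t^{-(\xi+1)(1+\alpha_k)}$ with a constant that, after combining the two tails in \eqref{eq:twotail}, collapses to the stated form; chasing these constants to land exactly on $\frac{1}{\log\zeta}$ is the only genuinely delicate bookkeeping, and mirrors closely the analysis in our earlier work \cite{madhushani2019heterogeneous,madhushani2020distributed}.
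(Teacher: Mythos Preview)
Your proposal is correct and outlines the standard self-normalized martingale concentration argument (Azuma--Hoeffding on the centered reward martingale, followed by geometric peeling over the range of $N_i^k(t)\in\{1,\ldots,(1+d_k)t\}$ with ratio $\zeta$), which indeed produces exactly the $\frac{1}{\log\zeta}\log((1+d_k)t)$ prefactor and the $t^{-(\xi+1)(1+\alpha_k)}$ decay.

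The paper, however, does not carry out this argument from scratch. Its proof consists of a single citation: it invokes Theorem~1 of \cite{madhushani2020dynamic}, which already states (in the notation there) that
\[
\mathbb{P}\!\left(\widehat{\mu}_i^k(T)-\mu_i>\sqrt{\vartheta/N_i^k(T)}\right)\le \frac{\log((d_k+1)T)}{\log\zeta}\,\exp(-2\kappa\vartheta),
\]
with $\kappa=\sigma_i^{-2}(\zeta^{1/4}+\zeta^{-1/4})^{-2}$, and then simply substitutes $\vartheta=2\sigma_i^2(1+\alpha_k)(\xi+1)\log t$, using $\alpha_k\ge 0$. So the paper treats the peeling bound as a black box, whereas you are re-deriving that black box. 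What your approach buys is self-containment and transparency about where each constant ($\log\zeta$, $(1+d_k)$, the exponent) comes from; what the paper's approach buys is a two-line proof. Substantively the two are the same argument at different levels of packaging, and your identification of the martingale-difference structure and the geometric slicing as the ``hard part'' matches precisely what underlies the cited theorem.
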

\begin{proof}
From Theorem 1 in the paper \cite{madhushani2019heterogeneous} we have
for some $\zeta>1$ and for $\sigma_i>0$ there exists a $\vartheta_k>0$ such that
\begin{align*}
    \mathbb{P}\left(\widehat{\mu}_i^k(T)-{\mu}_i>\sqrt{\frac{\vartheta_k}{N_{i}^{k}(T)}}\right)\leq \frac{\nu\log ((d_k+1)T)}{\exp(2\kappa \vartheta_k)}
\end{align*}
where, $\nu=\frac{1}{\log \zeta},\:\:\:\: \kappa=\frac{1}{\sigma_i^2\left(\zeta^{\frac{1}{4}}+\zeta^{-\frac{1}{4}}\right)^2}.$ Since $\alpha_k\geq 0,\forall k$, we can use $\vartheta_k=2\sigma_{i}^{2}(1+\alpha_k)(\xi+1)\log t$ to get the statement of the lemma.
\end{proof}
Using the statement of Lemma \ref{lem:tailProb} in \eqref{eq:twotail},
\begin{align}
\mathbb{P}\left(Q_i^k(t)\geq Q_{i^*}^k(t),N_i^k(t)>\eta_i^k(t)\right)\nonumber \\ 
\leq 
\frac{2}{\log \zeta} \frac{\log \left((1+d_k)t\right)}{t^{(\xi+1)(1+\alpha_k)}}.
\label{inequality2}
\end{align}
Summing the right hand side of \eqref{inequality2} over $t$ we get
\begin{align}
&\sum_{t=1}^T\frac{\log \left((1+d_k)t\right)}{t^{(\xi+1)(1+\alpha_k)}} \leq \log (1+d_k) \nonumber \\ & \! + \frac{\log (1+d_k)(\xi\alpha_k+\xi+\alpha_k)+1}{(\xi\alpha_k+\xi+\alpha_k)^2}.
\label{expand}
\end{align}
Since $\log $ is  concave, substituting \eqref{expand} into \eqref{inequality2} we get
\begin{align}
\sum_{k=1}^K\sum_{t=1}^T\mathbb{P}\left(Q_i^k(t)\geq Q_{i^*}^k(t),N_i^k(t)>\eta_i^k(t)\right)\nonumber\\
\leq \frac{2K}{\log \zeta}\log (1+d_{avg})
\nonumber\\+  \frac{2}{\log \zeta}\sum_{k=1}^K\frac{\log (1+d_k)(\xi\alpha_k+\xi+\alpha_k)+1}{(\xi\alpha_k+\xi+\alpha_k)^2},\label{eq:tail}
\end{align}
which upper bounds the second summation of (\ref{neq:step1regret}). 

Next, we upper bound the first summation term of  (\ref{neq:step1regret}) as follows. Since we restrict to  symmetric graphs where all center agents have the same number and type of neighbors,  $\alpha_k=\alpha, \forall k\leq m$. Then, by \eqref{eq:eta} we have $\eta_i^k(t)=(1+\alpha)\eta_i(t),\forall k\leq m$, and $\eta_i^k(t)=\eta_i(t) ,\forall k>m.$ Let $[x]^+=\max\{x,0\}.$

\begin{lemma} \label{lem:Bound}
Let $G$ be a symmetric multi-star graph with $m$ center agents and $K-m$ peripheral agents. Let $\{\eta_i^k(t)\}_1^T$  be the sequence of nonnegative nondecreasing functions given by \eqref{eq:eta}. Then
\begin{align*}
\sum_{k=1}^K\sum_{t=1}^T\mathbb{P}\left(\mathbb{I}_{\{\varphi_t^k=i\}},N_i^k(t)\leq \eta_i^k(t)\right) 
\leq (K-m)\eta_i(T)\\
+\frac{m}{1+p(m-1)}\left[1-p\frac{K-m}{m}\right]^+(1+\alpha)\eta_i(T).
\end{align*}
\end{lemma}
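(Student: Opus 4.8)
The plan is to split the double sum according to whether $k$ is a peripheral agent ($k>m$), for which $\eta_i^k(t)=\eta_i(t)$, or a center agent ($k\le m$), for which $\eta_i^k(t)=(1+\alpha)\eta_i(t)$, and to exploit that a center agent's observation count $N_i^k$ is inflated by broadcasts from the $m-1$ other center agents and from its $(K-m)/m$ peripheral neighbors, whereas a peripheral agent's is not.

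For a peripheral agent $k>m$ I would use a deterministic sample-path bound. Since $\mathbb{I}^t_{\{k,k\}}=1$, every time $\varphi_t^k=i$ the count $N_i^k$ rises by at least one, so if $t_1<t_2<\cdots$ are the times with $\varphi_t^k=i$ then $N_i^k(t_s)\ge s$; the summand is therefore nonzero only when $s\le N_i^k(t_s)\le\eta_i^k(t_s)=\eta_i(t_s)\le\eta_i(T)$, whence $\sum_{t=1}^{T}\mathbb{I}_{\{\varphi_t^k=i,\,N_i^k(t)\le\eta_i^k(t)\}}\le\eta_i(T)$ surely. Summing over the $K-m$ peripheral agents gives the term $(K-m)\eta_i(T)$.

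For a center agent $k\le m$, let $\tau_k$ be the last time $t$ with $\varphi_t^k=i$ and $N_i^k(t)\le(1+\alpha)\eta_i(t)$ (if any), and let $\beta_k$ be the number of such times. Writing $N_i^k(\tau_k)$ as agent $k$'s own samples of $i$ up to $\tau_k$ (at least $\beta_k$ of them) plus the option-$i$ broadcasts it has received up to $\tau_k$ from its center neighbors ($Z_k^{\mathrm{cen}}$) and from its peripheral neighbors ($Z_k^{\mathrm{per}}$), and using $N_i^k(\tau_k)\le(1+\alpha)\eta_i(\tau_k)\le(1+\alpha)\eta_i(T)$, one obtains $\beta_k\le(1+\alpha)\eta_i(T)-Z_k^{\mathrm{cen}}-Z_k^{\mathrm{per}}$, trivially true with the $Z$'s equal to $0$ when $\tau_k$ does not exist. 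Taking expectations and summing over the $m$ symmetric, hence identically distributed, center agents, the factor $1/(1+p(m-1))$ emerges: each bad sample of $i$ by a center agent is rebroadcast in expectation to each of the other $m-1$ center agents with probability $p$, so it contributes $1+p(m-1)$ to $\sum_{k\le m}(\beta_k+Z_k^{\mathrm{cen}})$, and rearranging the summed inequality gives $(1+p(m-1))\sum_{k\le m}\mathbb{E}(\beta_k)\le m(1+\alpha)\eta_i(T)-\sum_{k\le m}\mathbb{E}(Z_k^{\mathrm{per}})$. Finally, each center agent has $(K-m)/m$ peripheral neighbors, and — via the tail estimate of Lemma~\ref{lem:tailProb}, applied as in the bound on the second summation — each such neighbor must observe option $i$ of order $\eta_i(T)$ times before it ceases to favor $i$, so it either samples $i$ (and broadcasts to $k$ with probability $p$) on that order, feeding $Z_k^{\mathrm{per}}$, or it obtained those observations from $k$'s own broadcasts, which is self-limiting for $\beta_k$; this yields $\sum_{k\le m}\mathbb{E}(Z_k^{\mathrm{per}})$ of order $p(K-m)\eta_i(T)$, with a truncation $[\,\cdot\,]^+$ since each $\beta_k\ge0$. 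Assembling the pieces produces $\frac{m}{1+p(m-1)}\left[1-p\frac{K-m}{m}\right]^{+}(1+\alpha)\eta_i(T)$.

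The main obstacle is that $\tau_k$ is a random time strongly coupled to the neighbors' sampling trajectories and to the broadcast indicators, so none of the lower bounds on $\mathbb{E}(Z_k^{\mathrm{cen}})$ and $\mathbb{E}(Z_k^{\mathrm{per}})$ is immediate; one must argue — conditioning on the filtration generated by the agents' choices and the broadcast events and using that $\mathbb{E}(\mathbb{I}^\tau_{\{k,j\}})=p$ is independent of the decision process, together with $\tau_k\ge\beta_k$ so that a neighbor's relevant sampling window is large precisely when $\beta_k$ is large — that the $m$ coupled center-agent inequalities close up consistently. Making these estimates dovetail so that the three factors $1/(1+p(m-1))$, $[1-p(K-m)/m]^{+}$ and $(1+\alpha)$ come out exactly as a product, rather than in a weaker additive form, is the delicate accounting the proof must carry through.
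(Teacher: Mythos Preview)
Your peripheral-agent bound is correct and is in fact cleaner than what the paper needs there. The gap is in the center-agent part, and you flag it yourself: the random time $\tau_k$ depends on the entire history of broadcasts and choices, so lower-bounding $\mathbb{E}(Z_k^{\mathrm{cen}})$ and $\mathbb{E}(Z_k^{\mathrm{per}})$ in terms of the neighbors' own $\beta_j$'s runs into a circularity you never close. Your appeal to Lemma~\ref{lem:tailProb} to force each peripheral neighbor to sample $i$ of order $\eta_i(T)$ times \emph{before} $\tau_k$ is a heuristic, not an estimate; that lemma controls tail probabilities of $\widehat\mu_i^k$, not the timing of a neighbor's pulls relative to a stopping time defined through $k$'s own trajectory. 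You end by saying the accounting ``must'' be carried through, but you do not carry it through.

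The paper sidesteps all of this by working in expectation from the outset and reducing to a linear program. Because the broadcast indicators $\mathbb{I}^\tau_{\{k,j\}}$ are independent of the decision process, one has the exact identity
\[
\mathbb{E}\bigl(N_i^k(t)\bigr)\;=\;\mathbb{E}\bigl(n_i^k(t)\bigr)+p\!\!\sum_{j:\,e(k,j)\in\mathcal{E}}\!\!\mathbb{E}\bigl(n_i^j(t)\bigr),
\]
and, restricting to the event $\{N_i^k(t)\le\eta_i^k(t)\}$, the corresponding inequality with $\eta_i^k(t)$ on the right. Viewing the restricted expectations as unknowns $x_k\ge 0$, the problem becomes: maximize $\sum_k x_k$ subject to the $K$ linear constraints $x_k+p\sum_{j\sim k}x_j\le\eta_i^k(T)$. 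On the symmetric multi-star the optimum is symmetric, so the LP collapses to two variables (one for centers, one for peripherals), and the factors $1/(1+p(m-1))$ and $\bigl[1-p(K-m)/m\bigr]^{+}$ drop out as the solution of two linear inequalities rather than as the outcome of a coupled stopping-time argument. No random times, no filtration conditioning, and no use of Lemma~\ref{lem:tailProb} are needed for this lemma.
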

\begin{proof}
\normalfont
Recall the definitions of $n_i^k(t)$ and $N_i^k(t)$ in \eqref{eq:nandN}.
Since the communication structure is independent of the decision-making process $\forall k$, 
\begin{align}
\mathbb{E}\left(n_i^k(t)\right)+p\sum_{{e(k,j)\in\mathcal{E}}}^K \mathbb{E}\left(n_i^j(t)\right)=\mathbb{E}\left(N_i^k(t)\right).\label{eq:probcond}
\end{align}
Since $N_i^k(t)$ is a nonnegative random variable, $N_i^k(t)\leq \eta_i^k(t)\implies \mathbb{E}\left(N_i^k(t)\right)\leq \eta_i^k(t).$ Thus, from (\ref{eq:probcond}), for all $k$,
\begin{align}
\mathbb{E}&\left(n_i^k(t),N_i^k(t)\leq \eta_i^k(t)\right)\nonumber\\
&+p\sum_{{e(k,j)\in\mathcal{E}}}^K \mathbb{E}\left(n_i^j(t), N_i^k(t)\leq \eta_i^k(t)\right)\leq \eta_i^k(t). \label{eq:expNum}
\end{align}
To upper bound 
$\sum_{k=1}^K\sum_{t=1}^T\mathbb{P}\left(\mathbb{I}_{\{\varphi_t^k=i\}},N_i^k(t)\leq \eta_i^k(t)\right)$ 
we maximize $\sum_{k=1}^K\mathbb{E}(n_i^k(t))$ subject to the constraint given by (\ref{eq:expNum}). This is the linear programming optimization problem: maximize $\sum_{k=1}^K\mathbb{E}\left(n_i^k(t),N_i^k(t)\leq \eta_i^k(t)\right)$ subject to \eqref{eq:expNum} and $\mathbb{E}\left(n_i^k(t),N_i^k(t)\leq \eta_i^k(t)\right)\geq 0$ for all $k$.
For $p=1$ the solution is the sum of $\eta^k_i(t)$ over the maximal independent set of $G$, which for a multi-star graph is the set of peripheral agents $k\geq m+1$. 
Thus, for general $p$ we have
\begin{align*}
\sum_{k=1}^K\sum_{t=1}^T\mathbb{P}\left(\mathbb{I}_{\{\varphi_t^k=i\}},N_i^k(t)\leq \eta_i^k(t)\right)
\leq \sum_{k=m+1}^K\eta_i^k(T)\\
+\sum_{k=1}^m\frac{1}{1+p(m-1)}\left[1-p\frac{K-m}{m}\right]^+\eta_i^k(T),
\end{align*}
and the statement of the lemma follows.
\end{proof}

This concludes upper bounding the first summation of \eqref{neq:step1regret}.

\begin{theorem} \label{thm:mainresut}
Consider a distributed stochastic bandit problem with $N$ options, $K$ agents, and $T$ time steps. Let communication graph $G$ be a symmetric multi-star graph with $m$ center agents and $K-m$ peripheral agents. If all agents sample according to the heterogeneous sampling rules defined in Definition \ref{def:samplerule}, the expected cumulative group regret satisfies
\begin{align*}
&\mathbb{E}\left(R(T)\right) \leq c_1(K,m,\alpha,p)\sum_{i=1}^N\frac{8\sigma_i^2(\xi+1)\log T}{\Delta_i}\\
&
+\frac{2}{\log \zeta}\sum_{i=1}^N\Delta_i \Bigg( K\log (1+d_{avg}) +(K-m)\frac{\xi\log 2 + 1}{\xi^2} \\ 
& \quad \quad\quad \quad \quad  + m  \frac{\log (1+d_{cen})(\xi\alpha+\xi+\alpha) + 1}{(\xi\alpha+\xi+\alpha)^2} \Bigg), \\
& c_1(K,m,\alpha,p) = K-m + \frac{m(1+\alpha)}{1+p(m-1)}\left[1-p\frac{K-m}{m}\right]^+,
\end{align*}
where $\Delta_i$ is the expected reward gap between the options $i^*$ and  $i$, $\sigma_i^2$ is the variance proxy, and $\xi,\zeta>1$.
\end{theorem}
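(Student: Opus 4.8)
The plan is to assemble the theorem directly from the machinery already in place: the decomposition \eqref{neq:step1regret}, the bound \eqref{eq:tail} on its second summation (a consequence of Lemma~\ref{lem:tailProb}), and Lemma~\ref{lem:Bound} on its first summation. By \eqref{eq:regret} it suffices, for each sub-optimal option $i$, to upper bound $\sum_{k=1}^{K}\mathbb{E}(n_i^k(T))$; one then multiplies by $\Delta_i$ and sums over $i$. First I would fix once and for all the sequence of nonnegative nondecreasing functions $\eta_i^k(t)=(1+\alpha_k)\eta_i(t)$ from \eqref{eq:eta} --- the same choice that both subsequent steps require --- and apply \eqref{neq:step1regret}.

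For the second summation term of \eqref{neq:step1regret} I would follow the chain \eqref{eq:seteq}--\eqref{eq:tail}: on the event $N_i^k(t)>\eta_i^k(t)$, the inclusion $\{Q_i^k(t)\ge Q_{i^*}^k(t)\}$ forces one of the two empirical means to deviate from its true value by more than its confidence radius, Lemma~\ref{lem:tailProb} bounds each such probability, summation over $t$ gives \eqref{expand}, and concavity of $\log$ gives \eqref{eq:tail}. The symmetric-multi-star hypothesis enters here: every center agent has $\alpha_k=\alpha$ and $d_k=d_{cen}$, and every peripheral agent has $\alpha_k=0$ and $d_k=1$, so the residual per-agent sum in \eqref{eq:tail} collapses into $m$ identical center terms plus $K-m$ identical peripheral terms, the latter equal to $\frac{\xi\log 4+1}{\xi^2 2^\xi}$ after substituting $\alpha_k=0$ and $d_k=1$. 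Carrying along the prefactor $\frac{2}{\log\zeta}$ together with the $K\log(1+d_{avg})$ term reproduces the second block of constants in the theorem statement.

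For the first summation term of \eqref{neq:step1regret} I would apply Lemma~\ref{lem:Bound} verbatim; its bound $(K-m)\eta_i(T)+\frac{m(1+\alpha)}{1+p(m-1)}\left[1-p\frac{K-m}{m}\right]^+\eta_i(T)$ has $c_1(K,m,\alpha,p)$ as the coefficient of $\eta_i(T)$. To finish I would substitute $\eta_i(T)=\frac{8\sigma_i^2(\xi+1)\log T}{\Delta_i^2}$, multiply the two assembled bounds by $\Delta_i$, and sum over $i$, using $\Delta_i\,\eta_i(T)=\frac{8\sigma_i^2(\xi+1)\log T}{\Delta_i}$ for the leading logarithmic-in-$T$ term and leaving the rest as the stated constant-in-$T$ remainder. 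Since Lemmas~\ref{lem:tailProb} and \ref{lem:Bound} already carry all the analytic content, the remaining work is bookkeeping; what demands attention is keeping the choice of $\eta_i^k(t)$ consistent between the two bounding steps and performing the center/peripheral case split, which is legitimate precisely because the graph is a symmetric multi-star so that $\alpha_k$ and $d_k$ each take only two values. That case split, and verifying that every constant lands in its proper slot, is the main --- and rather mild --- obstacle.
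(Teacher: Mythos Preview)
Your proposal is correct and follows exactly the paper's approach: the paper's proof of Theorem~\ref{thm:mainresut} is a one-line citation of \eqref{eq:regret}, \eqref{neq:step1regret}, \eqref{eq:tail}, and Lemma~\ref{lem:Bound}, and you have simply spelled out the bookkeeping (the center/peripheral case split collapsing the per-agent sum in \eqref{eq:tail}, and the multiplication by $\Delta_i$ and summation over $i$) that the paper leaves implicit.
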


\begin{proof}
\normalfont
Result follows from \eqref{eq:regret}, (\ref{neq:step1regret}), (\ref{eq:tail}) and Lemma \ref{lem:Bound}.
\end{proof}

\begin{remark}\label{rem:homregret}
Recall that under the corresponding homogeneous sampling rules we have $\alpha_k=0, \forall k.$ Thus, we can recover the expected cumulative group regret bound for the homogeneous sampling rules as follows:
\begin{align*}
&\mathbb{E}\left(R(T)\right) \leq c_2(K,m,p)\sum_{i=1}^N\frac{8\sigma_i^2(\xi+1)\log T}{\Delta_i}\\
&
+\frac{2}{\log \zeta}\sum_{i=1}^N\Delta_i \Bigg( K\log (1+d_{avg})  +(K-m)\frac{\xi\log 2 + 1}{\xi^2}
\\ & \quad \quad\quad \quad \quad \quad + m  \frac{\log (1+d_{cen})\xi + 1}{\xi^2} \Bigg), 
\end{align*}
$c_2(K,m,p) = K-m+\frac{m}{1+p(m-1)}\left[1-p\frac{K-m}{m}\right]^+.$
\end{remark}

When the network graph has a large enough ratio of peripheral agents to center agents and a sufficiently high broadcasting probability $p$, i.e. $p(K-m)/m>1$, we have $
\left[1-p\frac{K-m}{m}\right]^+=0$,
which implies $c_1=c_2=K-m.$ And since $\alpha>0$ we have
$$\frac{\log (1+d_{cen})(\xi\alpha+\xi+\alpha) + 1}{(\xi\alpha+\xi+\alpha)^2} 
< \frac{\log (1+d_{cen})\xi + 1}{\xi^2}.$$

Plugging these results into the bounds of Theorem \ref{thm:mainresut} and Remark \ref{rem:homregret}, we see that the heterogeneous sampling rules provide a lower theoretical regret bound than the corresponding homogeneous sampling rules, which implies that the heterogeneous sampling rules provide better group performance than the  homogeneous sampling rules.

\begin{remark}
Our  bounds hold for sub-exponential reward distributions, where 
$X_i$ is a sub-exponential random variable with mean $\mu_i$ and parameters $(\sigma_i^2,b)$ with $b\leq \frac{\sigma_i}{2\sqrt{2(\xi+1)\log T}}.$
\end{remark}

\section{Simulation Results}\label{sec:simulation}
In this section we provide numerical simulations to illustrate results and validate theoretical bounds. For all simulations, we consider 10  options ($N=10$) with Gaussian reward distributions. Expected reward for the optimal option  is $\mu_{i^*}=11$ and for all sub-optimal options $i\neq i^*$ is $\mu_{i}=10$. We let variance associated with all options $i$ be $\sigma_i^2 =1$. Because the expected reward gaps  $\Delta_i=1$, $i\neq i^*$, are equal to the variances $\sigma_i^2 = 1$, it is a challenging problem to distinguish the optimal option from the sub-optimal options. For all simulations, we consider 1000 time steps ($T=1000$) and use 1000 Monte Carlo simulations with $\xi=1.01$. 

We show simulation results for  performance of a group of $K=36$ agents  that communicate over two different symmetric multi-star graphs and use the heterogeneous sampling rules of Definition~\ref{def:samplerule}. We compare to the case when agents use the corresponding homogeneous sampling rules of Definition~\ref{def:homrule}. The first multi-star graph has $m=2$ center agents and $K-m =34$ peripheral agents, with each center agent communicating with 17 peripheral agents and the other center agent. The second multi-star graph has $m=3$ center agents and $K-m=33$ peripheral agents, with each center agent communicating with 11 peripheral agents and the other center agents. In each case,  center agents are interchangeable and  peripheral agents are interchangeable, so the average performance of a center (peripheral) agent is the same as the individual performance of a center (peripheral) agent. 

Figure \ref{fig:netprob} shows how average expected cumulative group regret varies with broadcasting probability $p$ for agents using the heterogeneous rules (dotted) and homogeneous rules (solid). Regret is inversely related to  performance: lower group regret implies higher group performance. Results are plotted on the left for the graph with 2 center agents and on the right for the graph with 3 center agents. When $p=0$ there is no communication at all.  So when $p$ becomes even just a little positive and agents learn about options from their neighbors,  regret falls, i.e.,  group performance rises. 
\begin{figure}[h]
    \centering
    \includegraphics[width=0.45\textwidth]{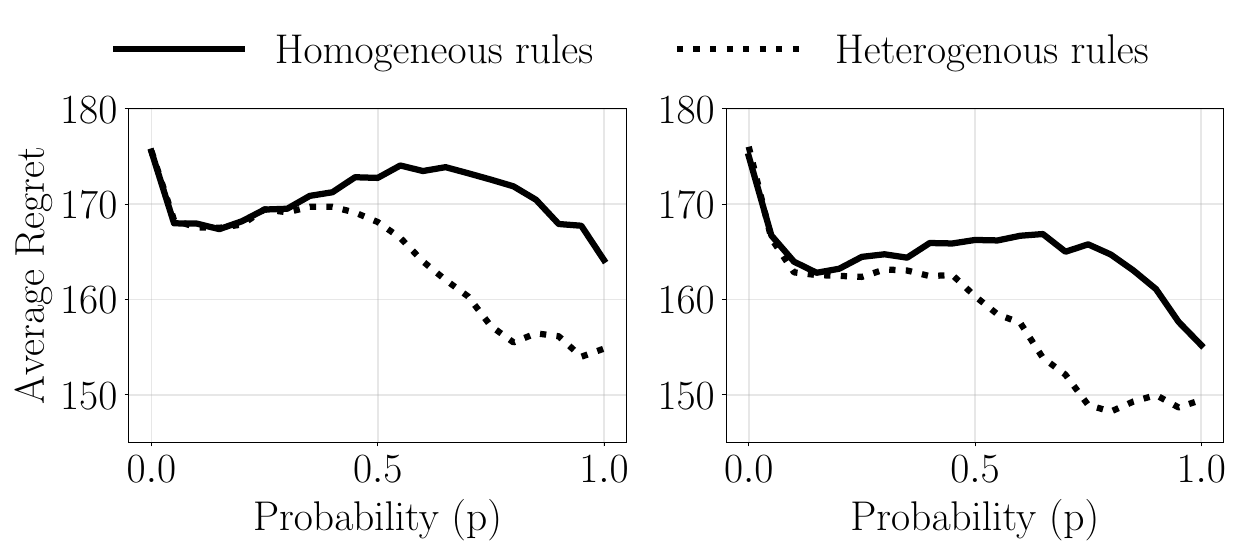}
    \caption{\small{Average expected  cumulative group regret for $K=36$ agents at time $t=1000$ as a function of broadcasting probability $p$ with communication over a symmetric multi-star graph. Left: 2 center and 34 peripheral agents. Right: 3 center and 33 peripheral agents. Dotted lines and solid line shows average regret when agents use  heterogeneous and homogeneous sampling rules, respectively.}
    } 
    \label{fig:netprob}
\end{figure}

In the case of the homogeneous rules, as $p$ increases through intermediate values, center agents do less and less exploring and the usefulness of the information received by peripheral agents decreases. This leads to increased regret for peripheral agents, and the group overall, and thus degraded group performance. When $p$ approaches 1,  center agents receive sufficient  information from their peripheral neighbors such that their improved performance outweighs the degraded performance of peripheral agents.
This leads to a final decrease in group regret and increase in group performance. 

The improvement in performance provided by the heterogeneous rules relative to the homogeneous rules, as predicted by Theorem~\ref{thm:mainresut} and Remark~\ref{rem:homregret}, can be clearly seen in Figure~\ref{fig:netprob} by observing how much lower the dotted regret curve is than the solid regret curve. The growth in regret in the  homogeneous case, as $p$ increases through intermediate values, is reduced in the heterogeneous case. This is because, by design, center agents are biased toward more exploring, which improves the information that peripheral agents receive. The group performance increase that comes, as $p$ increases further, occurs in the heterogeneous case well before $p$ approaches 1.

The influence of irregularity of the graph can be observed in Figure~\ref{fig:netprob} by comparing the left plot (2 center agents and more irregular) to the right plot (3 center agents and less irregular). The results suggest that performance is higher with more center agents, i.e., with greater regularity in the graph. 

Figure \ref{fig:nettime} shows expected cumulative regret as a function of time $t$ for  center (blue), peripheral (pink), and average (black) agents,  when $p=0.8$ and agents use the heterogeneous rules (dotted) and homogeneous rules (solid). Results are plotted on the left for the graph with 2 center agents and on the right for the graph with 3 center agents.  It can be observed that, as predicted for the heterogeneous rules, the peripheral agent performance increases and the center agent performance decreases, such that group performance (as represented by the average agent) improves.  Further, a comparison of left and right plots suggests that group performance improves with more center agents (more regularity).
\begin{figure}[h]
    \centering
    \includegraphics[width=0.45\textwidth]{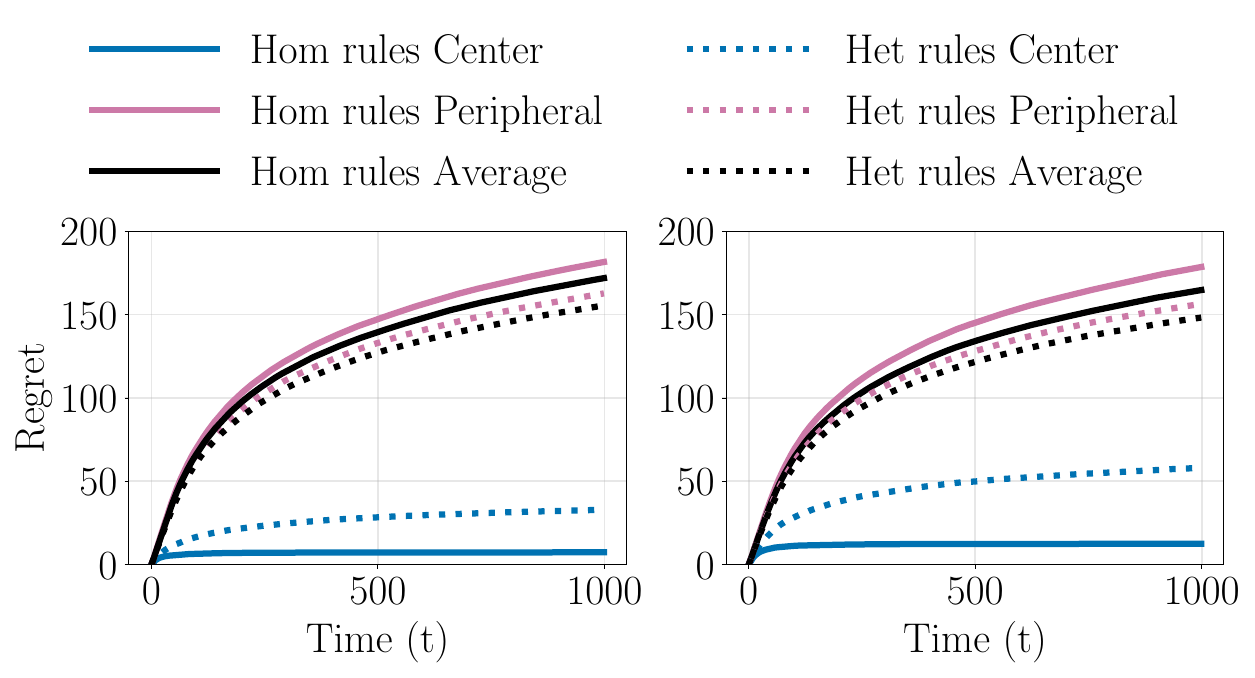}
    \caption{\small{Expected cumulative regret of center agent, peripheral agent, and average agent for $K=36$ agents as a function of time $t$ for $p=0.8$ and the same two symmetric multi-star graphs as in Figure~\ref{fig:netprob}: 2 center agents (left) and 3 center agents (right) where agents use heterogeneous (dotted) and homogeneous (solid) sampling rules.}}
    \label{fig:nettime}
\end{figure}


\section{Conclusions}\label{sec:conl}
We have designed and analyzed new heterogeneous rules for how a group of agents that share information over a network should sample an uncertain environment to maximize group reward. We consider communication networks defined by symmetric multi-star graphs, since these exemplify realistic settings. Using the multi-armed bandit problem as the explore-exploit framework, we show how sampling rules for center agents that favor exploring over exploiting make the information that center agents broadcast to their neighbors more useful, thereby increasing the total reward accumulated by the group.  

Our analysis and design advance  understanding of the role that heterogeneity does and can play in collective decision making.  And our demonstration that heterogeneity can be leveraged to improve the performance of a cooperative multi-agent system suggests that further investigation is warranted.


\bibliographystyle{IEEEtran}
\bibliography{LCC2020}

\end{document}